\newcommand{\veps}{\varepsilon}
\newcommand{\md}{\mathrm{d}}
\newcommand{\loc}{{\mathrm{loc}}}
\newcommand{\R}{\mathbb{R}}
\newcommand{\rmnum}[1]{\romannumeral #1} 
\newcommand{\Rmnum}[1]{\uppercase\expandafter{\romannumeral#1}} 
\newcommand{\calE}{\mathcal{E}}
\newcommand{\calF}{\mathcal{F}}
\newcommand{\calV}{\mathcal{V}}
\newcommand{\calW}{\mathcal{W}}
\newcommand{\scrE}{\mathscr{E}}
\newcommand{\frakD}{\mathfrak{D}}
\newcommand{\frakE}{\mathfrak{E}}
\newcommand{\frakR}{\mathfrak{R}}
\newcommand{\myset}[1]{\left\{#1\right\}}
\newcommand{\mybar}[1]{\overline{#1}}
\newtheorem{mythm}{Theorem}[section]
\newtheorem{myprop}[mythm]{Proposition}
\newtheorem{mylem}[mythm]{Lemma}
\newtheorem{mycor}[mythm]{Corollary}
\newtheorem{myrmk}[mythm]{Remark}
\newtheorem{mydef}[mythm]{Definition}
\begin{document}

\title{Construction of Local Regular Dirichlet Form on the Sierpi\'nski Gasket using $\Gamma$-Convergence}
\author{Meng Yang}
\date{}

\maketitle

\abstract{We construct a self-similar local regular Dirichlet form on the Sierpi\'nski gasket using $\Gamma$-convergence of stable-like non-local closed forms. As a continuation of a recent paper by Grigor'yan and the author, we give the first \emph{unified} purely analytic construction of local regular Dirichlet forms that works both on the Sierpi\'nski gasket and the Sierpi\'nski carpet.}

\footnote{\textsl{Date}: \today}
\footnote{\textsl{MSC2010}: 28A80}
\footnote{\textsl{Keywords}: Sierpi\'nski gasket, local regular Dirichlet form, $\Gamma$-convergence, stable-like non-local closed form}
\footnote{The author was supported by SFB1283 of the German Research Council (DFG). The author is very grateful to Professor Alexander Grigor'yan for very helpful discussions.}

\section{Introduction}

Recently, Grigor'yan and the author \cite{GY19} gave a purely analytic construction of a local regular Dirichlet form on the Sierpi\'nski carpet (SC). A natural question is to what extent this method can be applied on fractals. The main purpose of this paper is to apply this method to the Sierpi\'nski gasket (SG), a typical representative of p.c.f. (post critically finite) self-similar sets.

The classical analytic construction of local regular Dirichlet form on the SG was given by Kigami which also works on p.c.f. self-similar sets, see \cite{Kig89,Kig93,Kig01}. The most intrinsically essential ingredient in the construction of Kigami is the so-called compatible condition which gives the following two results, one is a monotonicity result which gives the characterization of the local quadratic form, the other is a harmonic extension which gives explicit functions in the domain of the local quadratic form to be dense in certain function spaces.

We get rid of compatible condition \emph{totally} in our construction, but resistance estimates play a central role. On the one hand, resistance estimates imply so-called weak monotonicity results which also give the characterization of the local quadratic form. On the other hand, resistance estimates imply uniform Harnack inequality which is used to construct functions in the domain of the local quadratic form to be dense in certain function spaces.

Since compatible condition does \emph{not} hold on the SC, but resistance estimates hold both on the SG and the SC, our construction can be applied both on the SG and the SC, but the construction of Kigami can not.

On the SG, besides analytic constructions, Barlow, Perkins \cite{BP88} and Kusuoka, Zhou \cite{KZ92} also gave probabilistic constructions using approximation of Markov chains. All these constructions give the same local regular Dirichlet form due to the uniqueness result given by Sabot \cite{Sab97}.

The ultimate purpose of this paper and \cite{GY19} is to provide a new unified method of construction of local regular Dirichlet forms on a wide class of fractals that uses only self-similar property and ideally should be independent of other specific properties, in particular, p.c.f. property.

The idea of our construction is using $\Gamma$-convergence of stable-like non-local closed forms, which is borrowed from the following classical result.
$$\lim_{\beta\uparrow2}(2-\beta)\int_{\R^d}\int_{\R^d}\frac{(u(x)-u(y))^2}{|x-y|^{d+\beta}}\md x\md y=C(d)\int_{\R^d}|\nabla u(x)|^2\md x$$
for all $u\in W^{1,2}(\R^d)$, where $C(d)$ is some positive constant, see \cite[Example 1.4.1]{FOT11}.

This paper is organized as follows. In Section \ref{sec_statement}, we give statement of the main result. In Section \ref{sec_space}, we list some results about Besov spaces. In Section \ref{sec_resist}, we give resistance estimates. In Section \ref{sec_monotone}, we give two weak monotonicity results. In Section \ref{sec_Harnack}, we give uniform Harnack inequality. In Section \ref{sec_good}, we construct one good function. In Section \ref{sec_BM}, we prove Theorem \ref{thm_BM}.

The paper \cite{Kum00} of Kumagai contains partially similar arguments in spirit.

\section{Statement of the Main Result}\label{sec_statement}

Consider the following points in $\R^2$: $p_0=(0,0)$, $p_1=(1,0)$, $p_2=(1/2,\sqrt{3}/2)$. Let $f_i(x)=(x+p_i)/2$, $x\in\R^2$. Then the SG is the unique non-empty compact set $K$ satisfying $K=f_0(K)\cup f_1(K)\cup f_2(K)$. Let $\nu$ be the normalized Hausdorff measure on $K$ of dimension $\alpha=\log3/\log2$. Let
$$V_0=\myset{p_0,p_1,p_2},V_{n+1}=f_0(V_n)\cup f_1(V_n)\cup f_2(V_n)\text{ for all }n\ge0.$$
Then $\myset{V_n}$ is an increasing sequence of finite sets and $K$ is the closure of $\cup_{n=0}^\infty V_n$.

Let $W_0=\myset{\emptyset}$ and 
$$W_n=\myset{w=w_1\ldots w_n:w_i=0,1,2,i=1,\ldots,n}\text{ for all }n\ge1.$$
For all
\begin{align*}
w^{(1)}&=w^{(1)}_1\ldots w^{(1)}_m\in W_m,\\
w^{(2)}&=w^{(2)}_1\ldots w^{(2)}_n\in W_n,
\end{align*}
denote $w^{(1)}w^{(2)}\in W_{m+n}$ by
$$w^{(1)}w^{(2)}=w^{(1)}_1\ldots w^{(1)}_mw^{(2)}_1\ldots w^{(2)}_n.$$
For all $i=0,1,2$, denote
$$i^n=\underbrace{i\ldots i}_{n\ \text{times}}.$$
For all $w=w_1\ldots w_n\in W_n$, let
$$f_w=f_{w_1}\circ\ldots\circ f_{w_n},$$
$$V_w=f_{w}(V_0),K_w=f_{w}(K).$$

For all $n\ge1$, let $\calV_n$ be the graph with vertex set $V_n$ and edge set given by
$$\myset{(p,q):p,q\in V_n,|p-q|=2^{-n}}.$$

For all $n\ge1$, let $\calW_n$ be the graph with vertex set $W_n$ and edge set given by
$$\myset{(w^{(1)},w^{(2)}):w^{(1)},w^{(2)}\in W_n,w^{(1)}\ne w^{(2)},K_{w^{(1)}}\cap K_{w^{(2)}}\ne\emptyset}.$$
Denote $w^{(1)}\sim_n w^{(2)}$ if $(w^{(1)},w^{(2)})$ is an edge.

For all $u\in L^2(K;\nu)$, $n\ge1$, let $P_nu:W_n\to\R$ be given by
$$P_nu(w)=\frac{1}{\nu(K_{w})}\int_{K_w}u(x)\nu(\md x)=\int_K(u\circ f_w)(x)\nu(\md x),w\in W_n.$$

Our main result is as follows.

\begin{mythm}\label{thm_BM}
There exists a self-similar strongly local regular Dirichlet form $(\calE_\loc,\calF_\loc)$ on $L^2(K;\nu)$ satisfying
\begin{align*}
&\calE_\loc(u,u)\asymp\sup_{n\ge1}\left(\frac{5}{3}\right)^n\sum_{w^{(1)}\sim_nw^{(2)}}\left(P_nu(w^{(1)})-P_nu(w^{(2)})\right)^2,\\
&\calF_\loc=\myset{u\in L^2(K;\nu):\sup_{n\ge1}\left(\frac{5}{3}\right)^n\sum_{w^{(1)}\sim_nw^{(2)}}\left(P_nu(w^{(1)})-P_nu(w^{(2)})\right)^2<+\infty}.
\end{align*}
\end{mythm}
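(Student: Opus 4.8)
The plan is to realize the program sketched in the introduction in three stages, as outlined by the author: first replace the stable-like forms $\frakE_\beta$ by equivalent averaged forms $\calE_\beta$ on the graphs $X_n$; second extract a $\Gamma$-limit; third upgrade this $\Gamma$-limit to a self-similar strongly local regular Dirichlet form. For the first stage, I would introduce, for $\beta\in(\alpha,\beta^*)$, the discrete quadratic forms
\begin{equation*}
\calE_\beta(u,u)=\sum_{n\ge1}\left(\frac{5}{3}\right)^{?}\,2^{-n(\beta^*-\beta)}\sum_{w^{(1)}\sim_nw^{(2)}}\bigl(P_nu(w^{(1)})-P_nu(w^{(2)})\bigr)^2
\end{equation*}
(with the correct scaling exponent to be pinned down so that this is comparable to $\frakE_\beta$), show using the Hausdorff-measure bound $\nu(K_w)\asymp 3^{-n}$ and a standard chaining/telescoping argument across scales that $\calE_\beta(u,u)\asymp\frakE_\beta(u,u)$ uniformly in $\beta$ away from $\beta^*$, and verify that $(\calE_\beta,\calF_\beta)$ is a regular closed form on $L^2(K;\nu)$ (lower semicontinuity is immediate from Fatou applied to the non-negative sum, density of a nice core follows because $P_nu$ only depends on finitely many averages).

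For the second stage I would invoke the general compactness theorem for $\Gamma$-convergence of quadratic forms on the fixed Hilbert space $L^2(K;\nu)$: any sequence $(\beta^*-\beta_n)\calE_{\beta_n}$ with $\beta_n\uparrow\beta^*$ has a $\Gamma$-convergent subsequence, whose $\Gamma$-limit $\calE$ is again a closed form. The key analytic input here is the \emph{weak monotonicity} asserted in the introduction (to be proved in the section on monotonicity): it gives a two-sided control
\begin{equation*}
c\,\sup_{n\ge1}\left(\frac{5}{3}\right)^n\!\!\sum_{w^{(1)}\sim_nw^{(2)}}\!\!\bigl(P_nu(w^{(1)})-P_nu(w^{(2)})\bigr)^2\le\varliminf_{\beta\uparrow\beta^*}(\beta^*-\beta)\calE_\beta(u,u)\le\varlimsup_{\beta\uparrow\beta^*}(\beta^*-\beta)\calE_\beta(u,u)\le C\,\sup_{n\ge1}(\cdots),
\end{equation*}
which simultaneously (i) identifies the domain of every subsequential $\Gamma$-limit with the space $\calF_\loc$ in the statement, (ii) shows $\calE\asymp\sup_n(5/3)^n\sum(\cdots)$, and (iii) shows the limit is independent of the chosen subsequence up to equivalence of forms. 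Non-triviality of $\calF_\loc$ is where the \emph{good functions} from the resistance-estimate section enter: they supply, for any pair of distinct cells, a function in $\calF_\loc$ separating them with controlled energy, so $\calF_\loc$ is dense in $C(K)$ and hence in $L^2(K;\nu)$, giving regularity.

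For the third stage I would follow the Kusuoka–Zhou averaging/fixed-point procedure: the form $\calE$ produced above need not be Markovian, local, or exactly self-similar, but one defines
\begin{equation*}
\calE_\loc(u,u)=\lim_{m\to\infty}\ \inf\left\{\ \text{averaged renormalized energies of }u\text{ at scale }m\ \right\}
\end{equation*}
and shows, using the resistance estimates and the weak self-similarity already present in $\calE$, that this limit exists, is comparable to $\calE$ (hence to the $\sup_n$ expression), is a Dirichlet form (the unit contraction acts as a contraction because it does so at each finite scale), is self-similar by construction of the averaging, and is strongly local because the finite-scale approximants kill functions that are constant near each other on cells. Finally I would note that the resulting $(\calE_\loc,\calF_\loc)$ coincides with Kigami's classical form by Sabot's uniqueness theorem. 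I expect the main obstacle to be the weak monotonicity inequality: without the clean monotone sequence $\frakE_n\uparrow\frakE_\loc$ of Kigami, one must control how the averaged energies at successive dyadic scales compare, and getting both the upper and lower bound in the displayed two-sided estimate — in particular ruling out energy ``leaking'' across scales as $\beta\uparrow\beta^*$ — is the delicate point on which the identification of the $\Gamma$-limit entirely rests.
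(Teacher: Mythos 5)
Your outline reproduces the three-stage program from the introduction and is broadly faithful to the paper's route (averaged forms $\calE_\beta$, $\Gamma$-compactness, identification of the limit via weak monotonicity, good functions for regularity, a Kusuoka--Zhou averaging step, locality from self-similarity). However, there is one genuine gap and one step described too vaguely to count as a proof. The gap is your claim that the unit contraction operates on $\calE_\loc$ ``because it does so at each finite scale.'' It does not: none of the approximating forms in this construction is Markovian. The paper explicitly notes that $\calE_\beta$ lacks the Markovian property, and the building blocks of the third stage, $\mybar{\calE}(u,u)=\sum_{i=0}^2\bigl(u(p_i)-\int_Ku\,\md\nu\bigr)^2$ and their rescalings $\mybar{\calE}^{(n)}$, are also not Markovian (e.g.\ a function with $u(p_i)=0$ and $\int_Ku\,\md\nu=0$ can have $\mybar{\calE}(u,u)=0$ while $\mybar{\calE}(\bar u,\bar u)>0$ for $\bar u=(0\vee u)\wedge1$). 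Markovianity of $\calE_\loc$ is therefore \emph{not} inherited from finite scales; the paper derives it a posteriori from strong locality, via $\calE_\loc(u^+,u^-)=0$, $\calE_\loc(u,u)=\calE_\loc(1-u,1-u)$, and the chain $\calE_\loc(u,u)\ge\calE_\loc(u^+,u^+)=\calE_\loc(1-u^+,1-u^+)\ge\calE_\loc(\mybar{u},\mybar{u})$ (the argument of Barlow--Bass--Kumagai--Teplyaev). Without some such argument your construction only yields a closed form, not a Dirichlet form.

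The vague step is the third stage itself. The paper does not take an infimum or run a fixed-point iteration: it forms the Ces\`aro averages $\tilde{\calE}^{(n)}(u,u)=\frac1n\sum_{l=1}^n\mybar{\calE}^{(l)}(u,u)$, proves the two-sided bounds $\frac16 D_n(u)\le\mybar{\calE}^{(n)}(u,u)\le 3c^2C\sup_kD_k(u)$ (the upper bound uses the H\"older estimate of Theorem \ref{thm_cont}, which your sketch never invokes but which is also what puts $\calF_\beta$ and $\calF$ inside $C(K)$ so that Stone--Weierstrass applies), and extracts a subsequence $n_k$ along which $\tilde{\calE}^{(n_k)}(u,u)$ converges for all $u$ in a countable dense set, hence for all $u\in\calF$. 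Exact self-similarity then falls out of the telescoping identity $\mybar{\calE}^{(n+1)}(u,u)=\frac53\sum_{i=0}^2\mybar{\calE}^{(n)}(u\circ f_i,u\circ f_i)$ after Ces\`aro averaging. One further point of precision: your displayed two-sided estimate concerns $\varliminf/\varlimsup(\beta^*-\beta)\calE_\beta(u,u)$ evaluated at $u$ itself, but identifying the $\Gamma$-limit requires controlling recovery sequences $u_n\to u$; the paper bridges this using the $L^2$-continuity of each fixed $D_k$ together with weak monotonicity ($D_k(u)=\lim_nD_k(u_n)\le C\varliminf_nD_n(u_n)$). You correctly identify weak monotonicity as the crux, but the recovery-sequence step should be made explicit.
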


\begin{myrmk}
The above result was also obtained by Kusuoka and Zhou \cite[Theorem 7.19, Example 8.4]{KZ92} using approximation of Markov chains. Here, we use $\Gamma$-convergence of stable-like non-local closed forms.
\end{myrmk}

\section{Besov Spaces}\label{sec_space}

In this section, we list some results about Besov spaces.

For all $\beta\in(0,+\infty)$, define
\begin{align*}
\left[u\right]_{B^{2,2}_{\alpha,\beta}(K)}&=\sum_{n=1}^\infty2^{(\alpha+\beta)n}\int\limits_K\int\limits_{B(x,2^{-n})}(u(x)-u(y))^2\nu(\md y)\nu(\md x),\\
\left[u\right]_{B^{2,\infty}_{\alpha,\beta}(K)}&=\sup_{n\ge1}2^{(\alpha+\beta)n}\int\limits_K\int\limits_{B(x,2^{-n})}(u(x)-u(y))^2\nu(\md y)\nu(\md x),
\end{align*}
and 
\begin{align*}
B_{\alpha,\beta}^{2,2}(K)&=\myset{u\in L^2(K;\nu):\left[u\right]_{B^{2,2}_{\alpha,\beta}(K)}<+\infty},\\
B_{\alpha,\beta}^{2,\infty}(K)&=\myset{u\in L^2(K;\nu):\left[u\right]_{B^{2,\infty}_{\alpha,\beta}(K)}<+\infty}.
\end{align*}

$B^{2,2}_{\alpha,\beta}(K)$ and $B^{2,\infty}_{\alpha,\beta}(K)$ have the following equivalent semi-norms.
\begin{mylem}(\cite[Lemma 3.1]{HK06}, \cite[Theorem 1.1, Lemma 2.1]{Yan18}, \cite[Lemma 2.1, Proposition 11.1]{GY19})\label{lem_equiv}
\begin{enumerate}[(1)]
\item For all $\beta\in(0,+\infty)$, for all $u\in L^2(K;\nu)$, we have
\begin{align*}
[u]_{B^{2,2}_{\alpha,\beta}(K)}&\asymp\calE_\beta(u,u)\asymp\frakE_\beta(u,u),\\
[u]_{B^{2,\infty}_{\alpha,\beta}(K)}&\asymp\sup_{n\ge1}2^{(\beta-\alpha)n}\sum_{w^{(1)}\sim_nw^{(2)}}\left(P_nu(w^{(1)})-P_nu(w^{(2)})\right)^2,
\end{align*}
where
\begin{align*}
\calE_\beta(u,u)&=\sum_{n=1}^\infty2^{(\beta-\alpha)n}\sum_{w^{(1)}\sim_nw^{(2)}}\left(P_nu(w^{(1)})-P_nu(w^{(2)})\right)^2,\\
\frakE_\beta(u,u)&=\int_K\int_K\frac{(u(x)-u(y))^2}{|x-y|^{\alpha+\beta}}\nu(\md x)\nu(\md y).
\end{align*}
\item For all $\beta\in(\alpha,+\infty)$, for all $u\in C(K)$, we have
\begin{align*}
[u]_{B^{2,2}_{\alpha,\beta}(K)}&\asymp E_\beta(u,u),\\
[u]_{B^{2,\infty}_{\alpha,\beta}(K)}&\asymp\sup_{n\ge1}2^{(\beta-\alpha)n}\sum_{w\in W_n}\sum_{p,q\in V_w}\left(u(p)-u(q)\right)^2,
\end{align*}
where
$$E_\beta(u,u)=\sum_{n=1}^\infty2^{(\beta-\alpha)n}\sum_{w\in W_n}\sum_{p,q\in V_w}\left(u(p)-u(q)\right)^2.$$
\end{enumerate}
\end{mylem}

$B^{2,2}_{\alpha,\beta}(K)$ and $B^{2,\infty}_{\alpha,\beta}(K)$ can be embedded into H\"older spaces as follows.

\begin{mylem}\label{lem_cont}(\cite[Theorem 4.11 (\rmnum{3})]{GHL03})
For all $u\in L^2(K;\nu)$, let
\begin{align*}
E(u)&=\sum_{n=1}^\infty2^{(\beta-\alpha)n}\sum_{w^{(1)}\sim_nw^{(2)}}\left(P_nu(w^{(1)})-P_nu(w^{(2)})\right)^2,\\
F(u)&=\sup_{n\ge1}2^{(\beta-\alpha)n}\sum_{w^{(1)}\sim_nw^{(2)}}\left(P_nu(w^{(1)})-P_nu(w^{(2)})\right)^2.
\end{align*}
Then for all $\beta\in(\alpha,+\infty)$, there exists some positive constant $c$ such that
\begin{align*}
|u(x)-u(y)|&\le c\sqrt{E(u)}|x-y|^{\frac{\beta-\alpha}{2}},\\
|u(x)-u(y)|&\le c\sqrt{F(u)}|x-y|^{\frac{\beta-\alpha}{2}},
\end{align*}
for $\nu$-almost every $x,y\in K$, for all $u\in L^2(K;\nu)$.
\end{mylem}

\begin{myrmk}
If $u\in L^2(K;\nu)$ satisfies $E(u)<+\infty$ or $F(u)<+\infty$, then $u$ has a continuous version in $C^{\frac{\beta-\alpha}{2}}(K)$.
\end{myrmk}

\section{Resistance Estimates}\label{sec_resist}

In this section, we give resistance estimates.

We need two techniques from electrical network. The first is the well-known $\Delta$-Y transform, see \cite[Lemma 2.1.15]{Kig01}. The second is shorting and cutting technique, see \cite{DS84}.

For all $n\ge1$, let us introduce an energy on $\calV_n$ given by
$$D_n(u,u)=\sum_{w\in W_n}\sum_{p,q\in V_w}(u(p)-u(q))^2,u\in l(V_n),$$
where $l(S)$ is the set of all real-valued functions on a set $S$. For all $p,q\in V_n$, let
\begin{align*}
R_n(p,q)&=\inf\myset{D_n(u,u):u(p)=0,u(q)=1,u\in l(V_n)}^{-1}\\
&=\sup\myset{\frac{\left(u(p)-u(q)\right)^2}{D_n(u,u)}:D_n(u,u)\ne0,u\in l(V_n)}.
\end{align*}
It is obvious that $R_n$ is a metric on $V_n$. Since the terms $(u(p)-u(q))^2$ are added twice in the summation of $D_n(u,u)$, each edge in $\calV_n$ has conductance 2, or equivalently, resistance $1/2$.

For all $n\ge1$, let us introduce an energy on $\calW_n$ given by
$$\frakD_n(u,u)=\sum_{w^{(1)}\sim_nw^{(2)}}\left(u(w^{(1)})-u(w^{(2)})\right)^2,u\in l(W_n).$$
For all $w^{(1)},w^{(2)}\in W_n$, let
\begin{align*}
\frakR_n(w^{(1)},w^{(2)})&=\inf\myset{\frakD_n(u,u):u(w^{(1)})=0,u(w^{(2)})=1,u\in l(W_n)}^{-1}\\
&=\sup\myset{\frac{\left(u(w^{(1)})-u(w^{(2)})\right)^2}{\frakD_n(u,u)}:\frakD_n(u,u)\ne0,u\in l(W_n)}.
\end{align*}
It is obvious that $\frakR_n$ is a metric on $W_n$. Since the terms $(u(w^{(1)})-u(w^{(2)}))^2$ are added twice in the summation of $\frakD_n(u,u)$, each edge in $\calW_n$ has conductance 2, or equivalently, resistance $1/2$.

\begin{myprop}\label{prop_resist}
For all $n\ge1$, we have
$$R_n(p_0,p_1)=R_n(p_1,p_2)=R_n(p_0,p_2)=\frac{1}{3}\left(\frac{5}{3}\right)^n,$$
$$\frakR_n(0^n,1^n)=\frakR_n(1^n,2^n)=\frakR_n(0^n,2^n)=\frac{1}{2}\left(\frac{5}{3}\right)^n-\frac{1}{2}.$$
\end{myprop}
\begin{myrmk}
In our construction of local regular Dirichlet forms, we only need the asymptotic behaviors of resistances as the case on the SC, see \cite[Theorem 5.1]{GY19}. The p.c.f. property of the SG is not essential, but only makes the calculation simple. 
\end{myrmk}

\begin{proof}
The proof is elementary using $\Delta$-Y transform.
\end{proof}

\begin{mycor}\label{cor_resist}
For all $n\ge1$, for all $w\in W_n$, we have
$$\frakR_n(w,0^n),\frakR_n(w,1^n,),\frakR_n(w,2^n)\le\frac{5}{4}\left(\frac{5}{3}\right)^n.$$
\end{mycor}

\begin{proof}
By symmetry, we only need to consider $\frakR_n(w,0^n)$. Letting $w=w_1\ldots w_{n-2}w_{n-1}w_n$, we construct a finite sequence in $W_n$ as follows.
\begin{align*}
w^{(1)}&=w_1\ldots w_{n-2}w_{n-1}w_{n}=w,\\
w^{(2)}&=w_1\ldots w_{n-2}w_{n-1}w_{n-1},\\
w^{(3)}&=w_1\ldots w_{n-2}w_{n-2}w_{n-2},\\
&\ldots\\
w^{(n)}&=w_1\ldots w_1w_1w_1,\\
w^{(n+1)}&=0\ldots000=0^n.
\end{align*}
For all $i=1,\ldots,n-1$, by cutting technique, we have
\begin{align*}
&\frakR_n(w^{(i)},w^{(i+1)})\\
&=\frakR_n(w_1\ldots w_{n-i-1}w_{n-i}w_{n-i+1}\ldots w_{n-i+1},w_1\ldots w_{n-i-1}w_{n-i}w_{n-i}\ldots w_{n-i})\\
&\le\frakR_{i}(w_{n-i+1}\ldots w_{n-i+1},w_{n-i}\ldots w_{n-i})\le \frakR_i(0^i,1^i)=\frac{1}{2}\left(\frac{5}{3}\right)^i-\frac{1}{2}\le\frac{1}{2}\left(\frac{5}{3}\right)^i.
\end{align*}
Since
$$\frakR_n(w^{(n)},w^{(n+1)})=\frakR_n(w_1^n,0^n)\le\frakR_n(0^n,1^n)=\frac{1}{2}\left(\frac{5}{3}\right)^n-\frac{1}{2}\le\frac{1}{2}\left(\frac{5}{3}\right)^n,$$
we have
$$\frakR_n(w,0^n)=\frakR_n(w^{(1)},w^{(n+1)})\le\sum_{i=1}^n\frakR_n(w^{(i)},w^{(i+1)})\le\frac{1}{2}\sum_{i=1}^n\left(\frac{5}{3}\right)^i\le\frac{5}{4}\left(\frac{5}{3}\right)^n.$$
\end{proof}

\section{Weak Monotonicity Results}\label{sec_monotone}

In this section, we give two weak monotonicity results.

For all $n\ge1$, let
$$a_n(u)=\left(\frac{5}{3}\right)^n\sum_{w\in W_n}\sum_{p,q\in V_w}(u(p)-u(q))^2,u\in l(V_n).$$

We have one weak monotonicity result as follows.

\begin{myprop}\label{prop_monotone1}
There exists some positive constant $C$ such that for all $n,m\ge1$, for all $u\in l(V_{n+m})$, we have
$$a_n(u)\le Ca_{n+m}(u).$$
Indeed, we can take $C=2$.
\end{myprop}

\begin{myrmk}
By the construction of Kigami, the sequence $\myset{a_n(u)}_{n\ge1}$ is indeed monotone increasing and the constant $C$ can be taken to be 1. However, we can only obtain $C=2$ using only resistance estimates.
\end{myrmk}

\begin{proof}
For all $w\in W_n$, for all $p,q\in V_w$ with $p\ne q$, by cutting technique and Proposition \ref{prop_resist}, we have
\begin{align*}
(u(p)-u(q))^2&\le R_m(f_w^{-1}(p),f_w^{-1}(q))\sum_{v\in W_m}\sum_{x,y\in V_{wv}}(u(x)-u(y))^2\\
&=\frac{1}{3}\left(\frac{5}{3}\right)^m\sum_{v\in W_m}\sum_{x,y\in V_{wv}}(u(x)-u(y))^2.
\end{align*}
Hence
\begin{align*}
a_n(u)&=\left(\frac{5}{3}\right)^n\sum_{w\in W_n}\sum_{p,q\in V_w,p\ne q}(u(p)-u(q))^2\\
&\le\left(\frac{5}{3}\right)^n\sum_{w\in W_n}\sum_{p,q\in V_w,p\ne q}\left(\frac{1}{3}\left(\frac{5}{3}\right)^m\sum_{v\in W_m}\sum_{x,y\in V_{wv}}(u(x)-u(y))^2\right)\\
&=2\cdot3\cdot\frac{1}{3}\left(\frac{5}{3}\right)^{n+m}\sum_{w\in W_n}\sum_{v\in W_m}\sum_{x,y\in V_{wv}}(u(x)-u(y))^2\\
&=2\left(\frac{5}{3}\right)^{n+m}\sum_{w\in W_{n+m}}\sum_{x,y\in V_w}(u(x)-u(y))^2\\
&=2a_{n+m}(u).
\end{align*}
\end{proof}

For all $n\ge1$, let
$$b_n(u)=\left(\frac{5}{3}\right)^n\sum_{w^{(1)}\sim_nw^{(2)}}\left(P_nu(w^{(1)})-P_nu(w^{(2)})\right)^2,u\in L^2(K;\nu).$$

We have the other weak monotonicity result as follows.

\begin{myprop}\label{prop_monotone2}
There exists some positive constant $C$ such that for all $u\in L^2(K;\nu)$, for all $n,m\ge1$, we have
$$b_n(u)\le Cb_{n+m}(u).$$
Indeed, we can take $C=36$.
\end{myprop}

This result can be reduced as follows.

For all $n\ge1$, let
$$B_n(u)=\left(\frac{5}{3}\right)^n\sum_{w^{(1)}\sim_nw^{(2)}}\left(u(w^{(1)})-u(w^{(2)})\right)^2,u\in l(W_n).$$

For all $n,m\ge1$, let $M_{n,m}:l(W_{n+m})\to l(W_n)$ be a mean value operator given by
$$(M_{n,m}u)(w)=\frac{1}{3^m}\sum_{v\in W_m}u(wv),w\in W_n,u\in l(W_{n+m}).$$

\begin{myprop}\label{prop_monotone2_graph}
There exists some positive constant $C$ such that for all $n,m\ge1$, for all $u\in l(W_{n+m})$, we have
$$B_n(M_{n,m}u)\le CB_{n+m}(u).$$
\end{myprop}

\begin{proof}[Proof of Proposition \ref{prop_monotone2} using Proposition \ref{prop_monotone2_graph}]
For all $u\in L^2(K;\nu)$, note that
$$P_nu=M_{n,m}(P_{n+m}u),$$
hence
\begin{align*}
b_n(u)&=\left(\frac{5}{3}\right)^n\sum_{w^{(1)}\sim_nw^{(2)}}\left(P_nu(w^{(1)})-P_nu(w^{(2)})\right)^2=B_n(P_nu)\\
&=B_n(M_{n,m}(P_{n+m}u))\le CB_{n+m}(P_{n+m}u)\\
&=C\left(\frac{5}{3}\right)^{n+m}\sum_{w^{(1)}\sim_{n+m}w^{(2)}}\left(P_{n+m}u(w^{(1)})-P_{n+m}u(w^{(2)})\right)^2\\
&=Cb_{n+m}(u).
\end{align*}
\end{proof}

\begin{proof}[Proof of Proposition \ref{prop_monotone2_graph}]
Fix $n\ge1$. Assume that $W\subseteq W_n$ is connected, that is, for all $w^{(1)},w^{(2)}\in W$, there exists a finite sequence $\myset{v^{(1)},\ldots,v^{(k)}}\subseteq W$ with $v^{(1)}=w^{(1)},v^{(k)}=w^{(2)}$ and $v^{(i)}\sim_nv^{(i+1)}$ for all $i=1,\ldots,k-1$. Let
$$\frakD_W(u,u)=
\sum_{\mbox{\tiny
$
\begin{subarray}{c}
w^{(1)},w^{(2)}\in W\\
w^{(1)}\sim_nw^{(2)}
\end{subarray}
$}}
(u(w^{(1)})-u(w^{(2)}))^2,u\in l(W).$$
For all $w^{(1)},w^{(2)}\in W$, let
\begin{align*}
\frakR_W(w^{(1)},w^{(2)})&=\inf\myset{\frakD_W(u,u):u(w^{(1)})=0,u(w^{(2)})=1,u\in l(W)}^{-1}\\
&=\sup\myset{\frac{(u(w^{(1)})-u(w^{(2)}))^2}{\frakD_W(u,u)}:\frakD_W(u,u)\ne0,u\in l(W)}.
\end{align*}
It is obvious that $\frakR_W$ is a metric on $W$.

By H\"older inequality, we have
\begin{align*}
B_n(M_{n,m}u)&=\left(\frac{5}{3}\right)^n\sum_{w^{(1)}\sim_nw^{(2)}}\left(\frac{1}{3^m}\sum_{v\in W_m}\left(u(w^{(1)}v)-u(w^{(2)}v)\right)\right)^2\\
&\le\left(\frac{5}{3}\right)^n\sum_{w^{(1)}\sim_nw^{(2)}}\frac{1}{3^m}\sum_{v\in W_m}\left(u(w^{(1)}v)-u(w^{(2)}v)\right)^2.
\end{align*}

Fix $w^{(1)}\sim_nw^{(2)}$. There exist $i,j=0,1,2$ such that $w^{(1)}i^m\sim_{n+m}w^{(2)}j^m$. For all $v\in W_m$, we have
$$\left(u(w^{(1)}v)-u(w^{(2)}v)\right)^2\le\frakR_{w^{(1)}W_m\cup w^{(2)}W_m}(w^{(1)}v,w^{(2)}v)\frakD_{w^{(1)}W_m\cup w^{(2)}W_m}(u,u).$$
By cutting technique and Corollary \ref{cor_resist}, we have
\begin{align*}
&\frakR_{w^{(1)}W_m\cup w^{(2)}W_m}(w^{(1)}v,w^{(2)}v)\\
&\le\frakR_{w^{(1)}W_m\cup w^{(2)}W_m}(w^{(1)}v,w^{(1)}i^m)+\frakR_{w^{(1)}W_m\cup w^{(2)}W_m}(w^{(1)}i^m,w^{(2)}j^m)\\
&+\frakR_{w^{(1)}W_m\cup w^{(2)}W_m}(w^{(2)}j^m,w^{(2)}v)\\
&\le\frakR_m(v,i^m)+\frac{1}{2}+R_m(v,j^m)\\
&\le\frac{5}{2}\left(\frac{5}{3}\right)^m+\frac{1}{2}\le3\left(\frac{5}{3}\right)^m,
\end{align*}
hence
\begin{align*}
&(u(w^{(1)}v)-u(w^{(2)}v))^2\le3\left(\frac{5}{3}\right)^m\frakD_{w^{(1)}W_m\cup w^{(2)}W_m}(u,u)\\
&=3\left(\frac{5}{3}\right)^m\left(\frakD_{w^{(1)}W_m}(u,u)+\frakD_{w^{(2)}W_m}(u,u)+2\left(u(w^{(1)}i^m)-u(w^{(2)}j^m)\right)^2\right),
\end{align*}
hence
\begin{align*}
&\frac{1}{3^m}\sum_{v\in W_m}\left(u(w^{(1)}v)-u(w^{(2)}v)\right)^2\\
&\le3\left(\frac{5}{3}\right)^m\left(\frakD_{w^{(1)}W_m}(u,u)+\frakD_{w^{(2)}W_m}(u,u)+2\left(u(w^{(1)}i^m)-u(w^{(2)}j^m)\right)^2\right).
\end{align*}
In the summation with respect to $w^{(1)}\sim_nw^{(2)}$, the terms $\frakD_{w^{(1)}W_m}(u,u)$ and $\frakD_{w^{(2)}W_m}(u,u)$ are summed at most 6 times, hence
\begin{align*}
&B_n(M_{n,m}u)\le2\cdot6\left(\frac{5}{3}\right)^n3\left(\frac{5}{3}\right)^m\sum_{w^{(1)}\sim_{n+m}w^{(2)}}(u(w^{(1)})-u(w^{(2)}))^2=36B_{n+m}(u).
\end{align*}
\end{proof}

\section{Uniform Harnack Inequality}\label{sec_Harnack}

In this section, we give uniform Harnack inequality as follows.

\begin{myprop}\label{prop_Harnack}
There exist some constants $C\in(0,+\infty)$, $\delta\in(0,1)$ such that for all $n\ge1$, $x\in K$, $r>0$, for all non-negative harmonic function $u$ on $V_n\cap B(x,r)$, we have
$$\max_{V_n\cap B(x,\delta r)}u\le C\min_{V_n\cap B(x,\delta r)}u.$$
\end{myprop}

\begin{myrmk}
The point of the above result is that the constant $C$ is \emph{uniform} in $n$.
\end{myrmk}

The idea is as follows. First, we use resistance estimates on finite graphs $\calV_n$ to obtain resistance estimates on an infinite graph $\calV_\infty$. Second, we obtain Green function estimates on $\calV_\infty$. Third, we obtain elliptic Harnack inequality on $\calV_\infty$. Finally, we transfer elliptic Harnack inequality on $\calV_\infty$ to uniform Harnack inequality on $\calV_n$.

Let $\calV_\infty$ be the graph with vertex set $V_\infty=\cup_{n=0}^\infty2^nV_n$ and edge set given by
$$\myset{(p,q):p,q\in V_\infty,|p-q|=1}.$$

Locally, $\calV_\infty$ is like $\calV_n$. Let the conductances of all edges be $1/2$. Let $d$ be the graph distance, that is, $d(p,q)$ is the minimum of the lengths of all paths connecting $p$ and $q$. It is obvious that
$$d(p,q)\asymp|p-q|\text{ for all }p,q\in V_\infty.$$
By shorting and cutting technique, we obtain resistance estimates on $\calV_\infty$ from $\calV_n$ as follows.
$$R(x,y)\asymp\left(\frac{5}{3}\right)^{\frac{\log d(x,y)}{\log2}}=d(x,y)^{\frac{\log(5/3)}{\log2}}=d(x,y)^\gamma\text{ for all }x,y\in V_\infty,$$
where $\gamma=\log(5/3)/\log2$.

Let $g_B$ be the Green function in a ball $B$. We have Green function estimates as follows.

\begin{myprop}(\cite[Proposition 6.11]{GHL14})\label{prop_Green}
There exist some constants $C\in(0,+\infty),\eta\in(0,1)$ such that for all $z\in V_\infty,r>0$, we have
$$g_{B(z,r)}(x,y)\le Cr^\gamma\text{ for all }x,y\in B(z,r),$$
$$g_{B(z,r)}(z,y)\ge\frac{1}{C}r^\gamma\text{ for all }y\in B(z,\eta r).$$
\end{myprop}

We obtain elliptic Harnack inequality on $\calV_\infty$ as follows.

\begin{myprop}(\cite[Lemma 10.2]{GT01},\cite[Theorem 3.12]{GH14a})\label{prop_Harnack_infinite}
There exist some constants $C\in(0,+\infty)$, $\delta\in(0,1)$ such that for all $z\in V_\infty,r>0$, for all non-negative harmonic function $u$ on $V_\infty\cap B(z,r)$, we have
$$\max_{B(z,\delta r)}u\le C\min_{B(z,\delta r)}u.$$
\end{myprop}

Now we obtain Proposition \ref{prop_Harnack} directly.

\section{One Good Function}\label{sec_good}

In this section, we construct \emph{one} good function with energy property and separation property.

By standard argument, we have H\"older continuity from Harnack inequality as follows.

\begin{myprop}\label{prop_Holder}
For all $0\le\delta_1<\veps_1<\veps_2<\delta_2\le1$, there exist some positive constants $\theta=\theta(\delta_1,\delta_2,\veps_1,\veps_2)$, $C=C(\delta_1,\delta_2,\veps_1,\veps_2)$ such that for all $n\ge1$, for all bounded harmonic function $u$ on $V_n\cap(\delta_1,\delta_2)\times\R$, we have
$$|u(x)-u(y)|\le C|x-y|^\theta\left(\max_{V_n\cap[\delta_1,\delta_2]\times\R}|u|\right)\text{ for all }x,y\in V_n\cap[\veps_1,\veps_2]\times\R.$$
\end{myprop}
\begin{proof}
The proof is similar to \cite[Theorem 3.9]{BB89}.
\end{proof}

For all $n\ge1$, let $u_n\in l(V_n)$ satisfy $u_n(p_0)=0$, $u_n(p_1)=1$ and
$$D_n(u_n,u_n)=\sum_{w\in W_n}\sum_{p,q\in V_w}(u_n(p)-u_n(q))^2=R_n(p_0,p_1)^{-1}.$$
Then $u_n$ is harmonic on $V_n\cap(0,1)\times\R$, $u_n(x,y)=1-u_n(1-x,y)$ for all $(x,y)\in V_n$ and
$$u_n|_{V_n\cap\myset{\frac{1}{2}}\times\R}=\frac{1}{2},u_n|_{V_n\cap[0,\frac{1}{2})\times\R}<\frac{1}{2},u_n|_{V_n\cap(\frac{1}{2},1]\times\R}>\frac{1}{2}.$$
By Arzel\`a-Ascoli theorem, Proposition \ref{prop_Holder} and diagonal argument, there exist some subsequence still denoted by $\myset{u_n}$ and some function $u$ on $K$ with $u(p_0)=0$ and $u(p_1)=1$ such that $u_n$ converges uniformly to $u$ on $K\cap[\veps_1,\veps_2]\times\R$ for all $0<\veps_1<\veps_2<1$. Hence $u$ is continuous on $K\cap(0,1)\times\R$, $u_n(x)\to u(x)$ for all $x\in K$ and $u(x,y)=1-u(1-x,y)$ for all $(x,y)\in K$.

\begin{myprop}\label{prop_u}
The function $u$ given above has the following properties.
\begin{enumerate}[(1)]
\item There exists some positive constant $C$ such that
$$a_n(u)\le C\text{ for all }n\ge1.$$
\item For all $\beta\in(\alpha,\log5/\log2)$, we have
$$E_{\beta}(u,u)<+\infty.$$
Hence $u\in C^{\frac{\beta-\alpha}{2}}(K)$.
\item
$$u|_{K\cap\myset{\frac{1}{2}}\times\R}=\frac{1}{2},u|_{K\cap[0,\frac{1}{2})\times\R}<\frac{1}{2},u|_{K\cap(\frac{1}{2},1]\times\R}>\frac{1}{2}.$$
\end{enumerate}
\end{myprop}

\begin{proof}
(1) By Proposition \ref{prop_resist} and Proposition \ref{prop_monotone1}, we have
\begin{align*}
a_n(u)&=\lim_{m\to+\infty}a_n(u_{n+m})\\
&\le C\lim_{m\to+\infty}a_{n+m}(u_{n+m})\\
&=C\lim_{m\to+\infty}\left(\frac{5}{3}\right)^{n+m}D_{n+m}(u_{n+m},u_{n+m})\\
&=C\lim_{m\to+\infty}\left(\frac{5}{3}\right)^{n+m}R_{n+m}(p_0,p_1)^{-1}\\
&=C\lim_{m\to+\infty}\left(\frac{5}{3}\right)^{n+m}\left(\frac{1}{3}\left(\frac{5}{3}\right)^{n+m}\right)^{-1}\\
&=3C.
\end{align*}

(2) By (1), for all $\beta\in(\alpha,\log5/\log2)$, we have
$$E_\beta(u,u)=\sum_{n=1}^\infty\left(2^{\beta-\alpha}\frac{3}{5}\right)^na_n(u)\le C\sum_{n=1}^\infty\left(2^{\beta-\alpha}\frac{3}{5}\right)^n<+\infty.$$
By Lemma \ref{lem_equiv} and Lemma \ref{lem_cont}, we have $u\in C^{\frac{\beta-\alpha}{2}}(K)$.

(3) It is obvious that
$$u|_{K\cap\myset{\frac{1}{2}}\times\R}=\frac{1}{2},u|_{K\cap[0,\frac{1}{2})\times\R}\le\frac{1}{2},u|_{K\cap(\frac{1}{2},1]\times\R}\ge\frac{1}{2}.$$
By symmetry, we only need to show that
$$u|_{K\cap(\frac{1}{2},1]\times\R}>\frac{1}{2}.$$
Suppose there exists $(x,y)\in K\cap(1/2,1)\times\R$ such that $u(x,y)=1/2$. Since $u_n-\frac{1}{2}$ is a non-negative harmonic function on $V_n\cap(\frac{1}{2},1)\times\R$, by Proposition \ref{prop_Harnack}, for all $1/2<\veps_1<x<\veps_2<1$, there exists some positive constant $C=C(\veps_1,\veps_2)$ such that for all $n\ge1$
$$\max_{V_n\cap[\veps_1,\veps_2]\times\R}\left(u_n-\frac{1}{2}\right)\le C\min_{V_n\cap[\veps_1,\veps_2]\times\R}\left(u_n-\frac{1}{2}\right).$$
Since $u_n$ converges uniformly to $u$ on $K\cap[\veps_1,\veps_2]\times\R$, we have
$$\sup_{K\cap[\veps_1,\veps_2]\times\R}\left(u-\frac{1}{2}\right)\le C\inf_{K\cap[\veps_1,\veps_2]\times\R}\left(u-\frac{1}{2}\right)=0.$$
Hence
$$u-\frac{1}{2}=0\text{ on }K\cap[\veps_1,\veps_2]\times\R\text{ for all }\frac{1}{2}<\veps_1<x<\veps_2<1.$$
Hence
$$u=\frac{1}{2}\text{ on }K\cap(\frac{1}{2},1)\times\R.$$
By continuity, we have
$$u=\frac{1}{2}\text{ on }K\cap[\frac{1}{2},1]\times\R,$$
contradiction!
\end{proof}

\section{Proof of Theorem \ref{thm_BM}}\label{sec_BM}

We list some basic facts about $\Gamma$-convergence. In what follows, $K$ is a locally compact separable metric space and $\nu$ is a Radon measure on $K$ with full support.

We say that $(\calE,\calF)$ is a \emph{closed form on $L^2(K;\nu)$ in the wide sense} if $\calF$ is complete under the inner product $\calE_1$ but $\calF$ is not necessary to be dense in $L^2(K;\nu)$. If $(\calE,\calF)$ is a closed form on $L^2(K;\nu)$ in the wide sense, we extend $\calE$ to be $+\infty$ outside $\calF$, hence the information of $\calF$ is encoded in $\calE$.

\begin{mydef}\label{def_gamma}
Let $\calE^n,\calE$ be closed forms on $L^2(K;\nu)$ in the wide sense. We say that $\calE^n$ is $\Gamma$-convergent to $\calE$ if the following conditions are satisfied.
\begin{enumerate}[(1)]
\item For all $\myset{u_n}\subseteq L^2(K;\nu)$ that converges strongly to $u\in L^2(K;\nu)$, we have
$$\varliminf_{n\to+\infty}\calE^n(u_n,u_n)\ge\calE(u,u).$$
\item For all $u\in L^2(K;\nu)$, there exists a sequence $\myset{u_n}\subseteq L^2(K;\nu)$ converging strongly to $u$ in $L^2(K;\nu)$ such that
$$\varlimsup_{n\to+\infty}\calE^n(u_n,u_n)\le\calE(u,u).$$
\end{enumerate}
\end{mydef}

\begin{myprop}\label{prop_gamma}(\cite[Proposition 6.8, Theorem 8.5, Theorem 11.10, Proposition 12.16]{Dal93})
Let $\myset{(\calE^n,\calF^n)}$ be a sequence of closed forms on $L^2(K;\nu)$ in the wide sense, then there exist some subsequence $\myset{(\calE^{n_k},\calF^{n_k})}$ and some closed form $(\calE,\calF)$ on $L^2(K;\nu)$ in the wide sense such that $\calE^{n_k}$ is $\Gamma$-convergent to $\calE$.
\end{myprop}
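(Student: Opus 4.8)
The plan is to obtain the subsequence from the abstract compactness of $\Gamma$-convergence and then to check that the resulting $\Gamma$-limit is again a non-negative symmetric closed form in the wide sense; the four results of \cite{Dal93} quoted in the statement package exactly these two ingredients. Concretely, for the \emph{extraction} step: since $K$ is a locally compact separable metric space and $\nu$ is Radon, $L^2(K;\nu)$ is separable, so its strong topology has a countable base, and by the metrizability and sequential compactness of $\Gamma$-convergence on such spaces (\cite[Theorem 8.5, Proposition 12.16]{Dal93}) one passes to a subsequence $\myset{(\calE^{n_k},\calF^{n_k})}$ and obtains a functional $\calE\colon L^2(K;\nu)\to[0,+\infty]$ with $\calE^{n_k}$ $\Gamma$-convergent to $\calE$ in the sense of Definition \ref{def_gamma}, each $\calE^n$ being read as $+\infty$ off $\calF^n$. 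Being a $\Gamma$-limit, $\calE$ is automatically lower semicontinuous for strong convergence in $L^2(K;\nu)$ (\cite[Proposition 6.8]{Dal93}).

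Next I would show \emph{$\calE$ is a quadratic form}. The constant sequence $u_n\equiv 0$ is a recovery sequence for $0$, so $\calE(0,0)\le 0$, while the $\liminf$-inequality forces $\calE\ge 0$; hence $\calE(0,0)=0$. For degree-two homogeneity, if $\myset{u_n}$ is a recovery sequence for $u$ then $\myset{tu_n}$ converges to $tu$ and $\calE^{n_k}(tu_n,tu_n)=t^2\calE^{n_k}(u_n,u_n)$, giving $\calE(tu,tu)\le t^2\calE(u,u)$, and replacing $u$ by $tu$ and $t$ by $1/t$ yields equality. For the parallelogram identity, fix $u,v$ and take recovery sequences $\myset{a_n}$ for $u+v$ and $\myset{b_n}$ for $u-v$; then $\tfrac12(a_n\pm b_n)$ converge to $u$ and to $v$, and applying the parallelogram identity of $\calE^{n_k}$ to this pair, passing to the limit on the side involving $a_n,b_n$ (a recovery sequence attains the value of the $\Gamma$-limit) and to the $\liminf$ on the side involving $u,v$, gives
$$\calE(u,u)+\calE(v,v)\le\tfrac12\calE(u+v,u+v)+\tfrac12\calE(u-v,u-v),$$
while the opposite inequality comes from running the same computation with recovery sequences for $u$ and for $v$. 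Thus $\calE$ satisfies the parallelogram identity and is a non-negative symmetric quadratic form (\cite[Theorem 11.10]{Dal93}), and polarization produces the bilinear form $\calE(\cdot,\cdot)$.

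Finally, \emph{closedness in the wide sense}. Set $\calF:=\myset{u\in L^2(K;\nu):\calE(u,u)<+\infty}$; the parallelogram identity and homogeneity make $\calF$ a linear subspace, and $\calE_1:=\calE+\langle\cdot,\cdot\rangle_{L^2(K;\nu)}$ an inner product on it. If $\myset{u_n}\subseteq\calF$ is $\calE_1$-Cauchy it is $L^2$-Cauchy, so $u_n\to u$ in $L^2(K;\nu)$; for each fixed $m$, $u_n-u_m\to u-u_m$ strongly as $n\to\infty$, and lower semicontinuity of $\calE$ gives $\calE(u-u_m,u-u_m)\le\varliminf_n\calE(u_n-u_m,u_n-u_m)$, which is arbitrarily small for large $m$. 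Hence $u-u_m\in\calF$, so $u\in\calF$ and $\calE_1(u_n-u,u_n-u)\to 0$; therefore $(\calF,\calE_1)$ is complete and, since no density of $\calF$ in $L^2(K;\nu)$ is required, $(\calE,\calF)$ is a closed form on $L^2(K;\nu)$ in the wide sense, which is the assertion.

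The only genuinely non-formal point is the parallelogram identity for $\calE$: non-negativity, vanishing at $0$, and degree-two homogeneity are immediate from scaling recovery sequences, but obtaining \emph{both} inequalities of the parallelogram law forces one to feed into the identity for $\calE^{n_k}$ the correctly chosen recovery sequences — for $u\pm v$ in one direction and for $u,v$ in the other — and to combine the $\liminf$-inequality with the fact that a recovery sequence realises the value of the $\Gamma$-limit. This is precisely the content encapsulated in the cited results of Dal Maso, so in the paper I would simply invoke them rather than reprove it.
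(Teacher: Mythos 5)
The paper gives no proof of this proposition --- it is quoted verbatim from Dal Maso --- and your proposal correctly identifies and reconstructs exactly the ingredients the four citations supply: sequential compactness of $\Gamma$-convergence on a second-countable space (so that separability of $L^2(K;\nu)$ yields the subsequence), lower semicontinuity of the $\Gamma$-limit, stability of non-negative quadratic forms under $\Gamma$-limits via the two-sided parallelogram argument with the correctly chosen recovery sequences, and the standard equivalence between lower semicontinuity and closedness; so your route is the same as the paper's, only with the black boxes opened. One small attribution slip: non-negativity of $\calE$ follows from the recovery-sequence (limsup) half of Definition \ref{def_gamma} together with $\calE^{n_k}\ge0$, not from the $\liminf$-inequality (which only bounds $\calE(u,u)$ from above along sequences), but this does not affect the argument.
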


In what follows, $K$ is the SG and $\nu$ is the normalized Hausdorff measure on $K$.

For all $\beta\in(\alpha,+\infty)$, by Lemma \ref{lem_equiv} and Lemma \ref{lem_cont}, denote
\begin{align*}
\calF_\beta&=B_{\alpha,\beta}^{2,2}(K)=\myset{u\in L^2(K;\nu):\left[u\right]_{B_{\alpha,\beta}^{2,2}(K)}<+\infty}\\
&=\myset{u\in L^2(K;\nu):\calE_\beta(u,u)<+\infty}=\myset{u\in L^2(K;\nu):\frakE_\beta(u,u)<+\infty}\\
&=\myset{u\in C(K):E_\beta(u,u)<+\infty},
\end{align*}
denote $\scrE_\beta(u,u)=[u]_{B^{2,2}_{\alpha,\beta}(K)}$ and
$$\beta^*:=\frac{\log5}{\log2}.$$

We have non-local regular closed forms and Dirichlet forms as follows.

\begin{myprop}\label{prop_nonlocal}
For all $\beta\in(\alpha,\beta^*)$, $(\calE_\beta,\calF_\beta)$ is a regular closed form on $L^2(K;\nu)$, $(E_\beta,\calF_\beta)$, $(\scrE_\beta,\calF_\beta),(\frakE_\beta,\calF_\beta)$ are regular Dirichlet forms on $L^2(K;\nu)$. For all $\beta\in[\beta^*,+\infty)$, $\calF_\beta$ consists only of constant functions.
\end{myprop}

\begin{myrmk}
$\calE_\beta$ does not have Markovian property, but $E_\beta,\scrE_\beta,\frakE_\beta$ do have Markovian property. An interesting problem in analysis on fractals is for which value $\beta>0$, $(\frakE_\beta,\calF_\beta)$ is a regular Dirichlet form on $L^2(K;\nu)$. The critical exponent
$$\beta_*:=\sup\myset{\beta>0:(\frakE_\beta,\calF_\beta)\text{ is a regular Dirichlet form on }L^2(K;\nu)}$$
is called the \emph{walk dimension of the SG}. A classical approach to determine $\beta_*$ is using heat kernel estimates and subordination technique to have 
$$\beta_*=\beta^*=\frac{\log5}{\log2},$$
see \cite{Pie00}. The following proof provides an alternative approach without using diffusion.
\end{myrmk}

\begin{proof}[Proof of Proposition \ref{prop_nonlocal}]
For all $\beta\in(\alpha,\beta^*)$. We only need to show that $\calF_\beta$ is uniformly dense in $C(K)$. Then $\calF_\beta$ is dense in $L^2(K;\nu)$. Using Fatou's lemma, we have $\calF_\beta$ is complete under $(\calE_\beta)_1$ metric. Moreover, $\calF_\beta\cap C(K)=\calF_\beta$ is trivially $(\calE_\beta)_1$-dense in $\calF_\beta$ and uniformly dense in $C(K)$. Hence $(\calE_\beta,\calF_\beta)$ is a regular closed form on $L^2(K;\nu)$.

It is obvious that $\calF_\beta$ is a sub-algebra of $C(K)$. By Stone-Weierstrass theorem, we only need to show that $\calF_\beta$ separates points.

Let $u$ be the function in Proposition \ref{prop_u}, then $E_\beta(u,u)<+\infty$, hence $u\in\calF_\beta$.

For all distinct $z_1=(x_1,y_1), z_2=(x_2,y_2)\in K$. Replace $z_i$ by $f_w^{-1}(z_i)$ with some $w\in W_n$ and some $n\ge1$, then there exist $i,j\in\myset{0,1,2}$ with $i\ne j$ such that $z_1\in K_i\backslash K_j$, $z_2\in K_j\backslash K_i$. Without loss of generality, we may assume that $i=0$, $j=1$, then $x_1<1/2<x_2$. By Proposition \ref{prop_u}, we have
$$u(z_1)<\frac{1}{2}<u(z_2).$$
Hence $\calF_\beta$ separates points.

Since $E_\beta,\scrE_\beta,\frakE_\beta$ do have Markovian property, $(E_\beta,\calF_\beta),(\scrE_\beta,\calF_\beta),(\frakE_\beta,\calF_\beta)$ are regular Dirichlet forms on $L^2(K;\nu)$.

For all $\beta\in[\beta^*,+\infty)$. Suppose that $u\in\calF_\beta$ is not constant, then there exists $N\ge1$ such that $b_N(u)>0$. By Proposition \ref{prop_monotone2}, we have
\begin{align*}
\calE_\beta(u,u)&=\sum_{n=1}^\infty2^{(\beta-\alpha)n}\left(\frac{3}{5}\right)^nb_n(u)\ge\sum_{n=N+1}^\infty2^{(\beta-\alpha)n}\left(\frac{3}{5}\right)^nb_n(u)\\
&\ge\frac{1}{C}\sum_{n=N+1}^\infty2^{(\beta-\alpha)n}\left(\frac{3}{5}\right)^nb_N(u)=+\infty,
\end{align*}
contradiction! Hence $\calF_\beta$ consists only of constant functions.
\end{proof}

We need an elementary result as follows.

\begin{mylem}\label{lem_ele}
Let $\myset{x_n}$ be a sequence of non-negative real numbers.
\begin{enumerate}[(1)]
\item $$\varliminf_{n\to+\infty}x_n\le\varliminf_{\lambda\uparrow1}(1-\lambda)\sum_{n=1}^\infty\lambda^nx_n\le\varlimsup_{\lambda\uparrow1}(1-\lambda)\sum_{n=1}^\infty\lambda^nx_n\le\varlimsup_{n\to+\infty}x_n\le\sup_{n\ge1}x_n.$$
\item If there exists some positive constant $C$ such that
$$x_n\le Cx_{n+m}\text{ for all }n,m\ge1,$$
then
$$\sup_{n\ge1}x_n\le C\varliminf_{n\to+\infty}x_n.$$
\end{enumerate}
\end{mylem}
\begin{proof}
The proof is elementary using $\veps$-$N$ argument.
\end{proof}

Take $\myset{\beta_n}\subseteq(\alpha,\beta^*)$ with $\beta_n\uparrow\beta^*$. By Proposition \ref{prop_gamma}, there exist some subsequence still denoted by $\myset{\beta_n}$ and some closed form $(\calE,\calF)$ on $L^2(K;\nu)$ in the wide sense such that $(\beta^*-\beta_n)\calE_{\beta_n}$ is $\Gamma$-convergent to $\calE$. Without loss of generality, we may assume that
$$0<\beta^*-\beta_n<\frac{1}{n+1}\text{ for all }n\ge1.$$

We have the characterization of $(\calE,\calF)$ on $L^2(K;\nu)$ as follows.

\begin{myprop}\label{prop_E}
\begin{align*}
\calE(u,u)&\asymp\sup_{n\ge1}a_n(u)=\sup_{n\ge1}\left(\frac{5}{3}\right)^n\sum_{w\in W_n}\sum_{p,q\in V_w}(u(p)-u(q))^2\\
&\asymp\sup_{n\ge1}b_n(u)=\sup_{n\ge1}\left(\frac{5}{3}\right)^n\sum_{w^{(1)}\sim_nw^{(2)}}\left(P_nu(w^{(1)})-P_nu(w^{(2)})\right)^2,
\end{align*}
$$\calF=\myset{u\in C(K):\sup_{n\ge1}a_n(u)<+\infty}=\myset{u\in L^2(K;\nu):\sup_{n\ge1}b_n(u)<+\infty}.$$
Moreover, $(\calE,\calF)$ is a regular closed form on $L^2(K;\nu)$.
\end{myprop}

\begin{proof}
Recall that
\begin{align*}
E_\beta(u,u)&=\sum_{n=1}^\infty2^{(\beta-\beta^*)n}a_n(u),\\
\calE_{\beta}(u,u)&=\sum_{n=1}^\infty2^{(\beta-\beta^*)n}b_n(u).
\end{align*}

On the one hand, for all $u\in L^2(K;\nu)$, we have
\begin{align*}
\calE(u,u)&\le\varliminf_{n\to+\infty}(\beta^*-\beta_n)\calE_{\beta_n}(u,u)=\varliminf_{n\to+\infty}(\beta^*-\beta_n)\sum_{k=1}^\infty2^{(\beta_n-\beta^*)k}b_k(u)\\
&=\varliminf_{n\to+\infty}\frac{\beta^*-\beta_n}{1-2^{\beta_n-\beta^*}}(1-2^{\beta_n-\beta^*})\sum_{k=1}^\infty2^{(\beta_n-\beta^*)k}b_k(u)\\
&=\frac{1}{\log2}\varliminf_{n\to+\infty}(1-2^{\beta_n-\beta^*})\sum_{k=1}^\infty2^{(\beta_n-\beta^*)k}b_k(u)\le\frac{1}{\log2}\sup_{k\ge1}b_k(u).
\end{align*}
On the other hand, for all $u\in L^2(K;\nu)$, there exists $\myset{u_n}\subseteq L^2(K;\nu)$ converging strongly to $u$ in $L^2(K;\nu)$ such that
\begin{align*}
\calE(u,u)&\ge\varlimsup_{n\to+\infty}(\beta^*-\beta_n)\calE_{\beta_n}(u_n,u_n)=\varlimsup_{n\to+\infty}(\beta^*-\beta_n)\sum_{k=1}^\infty2^{(\beta_n-\beta^*)k}b_k(u_n)\\
&\ge\varlimsup_{n\to+\infty}(\beta^*-\beta_n)\sum_{k=n+1}^\infty2^{(\beta_n-\beta^*)k}b_k(u_n)\\
&\ge\frac{1}{C}\varlimsup_{n\to+\infty}(\beta^*-\beta_n)\sum_{k=n+1}^\infty2^{(\beta_n-\beta^*)k}b_n(u_n)\\
&=\frac{1}{C}\varlimsup_{n\to+\infty}\left[(\beta^*-\beta_n)\frac{2^{(\beta_n-\beta^*)(n+1)}}{1-2^{\beta_n-\beta^*}}b_n(u_n)\right].
\end{align*}
Since $0<\beta^*-\beta_n<1/(n+1)$, we have $2^{(\beta_n-\beta^*)(n+1)}>1/2$. Since
$$\lim_{n\to+\infty}\frac{\beta^*-\beta_n}{1-2^{\beta_n-\beta^*}}=\frac{1}{\log2},$$
we have
$$\calE(u,u)\ge\frac{1}{2(\log2)C}\varlimsup_{n\to+\infty}b_n(u_n).$$
Since $u_n\to u$ in $L^2(K;\nu)$, for all $k\ge1$, we have
$$b_k(u)=\lim_{n\to+\infty} b_k(u_n)=\lim_{k\le n\to+\infty} b_k(u_n)\le C\varliminf_{n\to+\infty} b_n(u_n).$$
Taking supremum with respect to $k\ge1$, we have
$$\sup_{k\ge1}b_k(u)\le C\varliminf_{n\to+\infty}b_n(u_n)\le C\varlimsup_{n\to+\infty}b_n(u_n)\le 2(\log2)C^2\calE(u,u).$$
Hence
$$\frac{1}{2(\log2)C^2}\sup_{k\ge1}b_k(u)\le\calE(u,u)\le\frac{1}{\log2}\sup_{k\ge1}b_k(u).$$

By Lemma \ref{lem_equiv} and Lemma \ref{lem_cont}, we have $\calF\subseteq C(K)$ and
\begin{align*}
&\calE(u,u)\asymp\sup_{k\ge1}a_k(u),\\
&\calF=\myset{u\in C(K):\sup_{k\ge1}a_k(u)<+\infty}.
\end{align*}
Similar to the proof of Proposition \ref{prop_nonlocal}, we have $\calF$ is uniformly dense in $C(K)$, hence $(\calE,\calF)$ is a regular closed form on $L^2(K;\nu)$.
\end{proof}

Now we prove Theorem \ref{thm_BM} as follows.

\begin{proof}[Proof of Theorem \ref{thm_BM}]
For all $n\ge1$, for all $u\in l(V_{n+1})$, we have
\begin{align*}
\frac{5}{3}\sum_{i=0}^2a_n(u\circ f_i)&=\frac{5}{3}\sum_{i=0}^2\left(\frac{5}{3}\right)^n\sum_{w\in W_n}\sum_{p,q\in V_w}(u\circ f_i(p)-u\circ f_i(q))^2\\
&=\left(\frac{5}{3}\right)^{n+1}\sum_{w\in W_{n+1}}\sum_{p,q\in V_w}(u(p)-u(q))^2\\
&=a_{n+1}(u).
\end{align*}
Hence for all $n,m\ge1$, for all $u\in l(V_{n+m})$, we have
$$\left(\frac{5}{3}\right)^m\sum_{w\in W_m}a_n(u\circ f_w)=a_{n+m}(u).$$
For all $u\in\calF$, $n\ge1$, $w\in W_n$, we have
$$\sup_{k\ge1}a_k(u\circ f_w)\le\sup_{k\ge1}\sum_{w\in W_n}a_k(u\circ f_w)=\left(\frac{3}{5}\right)^n\sup_{k\ge1}a_{n+k}(u)\le\left(\frac{3}{5}\right)^n\sup_{k\ge1}a_{k}(u)<+\infty,$$
hence $u\circ f_w\in\calF$.

Let
$$\mybar{\calE}^{(n)}(u,u)=\left(\frac{5}{3}\right)^n\sum_{w\in W_n}\calE(u\circ f_w,u\circ f_w),u\in\calF,n\ge1.$$
Then
\begin{align*}
\mybar{\calE}^{(n)}(u,u)&\ge C\left(\frac{5}{3}\right)^n\sum_{w\in W_n}\varlimsup_{k\to+\infty}a_k(u\circ f_w)\ge C\left(\frac{5}{3}\right)^n\varlimsup_{k\to+\infty}\sum_{w\in W_n}a_k(u\circ f_w)\\
&=C\varlimsup_{k\to+\infty}a_{n+k}(u)\ge C\sup_{k\ge1}a_k(u).
\end{align*}
Similarly
\begin{align*}
\mybar{\calE}^{(n)}(u,u)&\le C\left(\frac{5}{3}\right)^n\sum_{w\in W_n}\varliminf_{k\to+\infty}a_k(u\circ f_w)\le C\left(\frac{5}{3}\right)^n\varliminf_{k\to+\infty}\sum_{w\in W_n}a_k(u\circ f_w)\\
&=C\varliminf_{k\to+\infty}a_{n+k}(u)\le C\sup_{k\ge1}a_k(u).
\end{align*}
Hence
$$\mybar{\calE}^{(n)}(u,u)\asymp\sup_{k\ge1}a_k(u)\text{ for all }u\in\calF,n\ge1.$$
Moreover, for all $u\in\calF$, $n\ge1$, we have
\begin{align*}
\mybar{\calE}^{(n+1)}(u,u)&=\left(\frac{5}{3}\right)^{n+1}\sum_{w\in W_{n+1}}\calE(u\circ f_w,u\circ f_w)\\
&=\left(\frac{5}{3}\right)^{n+1}\sum_{i=0}^2\sum_{w\in W_n}\calE(u\circ f_i\circ f_w,u\circ f_i\circ f_w)\\
&=\frac{5}{3}\sum_{i=0}^2\left(\left(\frac{5}{3}\right)^{n}\sum_{w\in W_n}\calE((u\circ f_i)\circ f_w,(u\circ f_i)\circ f_w)\right)\\
&=\frac{5}{3}\sum_{i=0}^2\mybar{\calE}^{(n)}(u\circ f_i,u\circ f_i).
\end{align*}
Let
$$\tilde{\calE}^{(n)}(u,u)=\frac{1}{n}\sum_{l=1}^n\mybar{\calE}^{(l)}(u,u),u\in\calF,n\ge1.$$
It is obvious that
$$\tilde{\calE}^{(n)}(u,u)\asymp\sup_{k\ge1}a_k(u)\text{ for all }u\in\calF,n\ge1.$$

Since $(\calE,\calF)$ is a regular closed form on $L^2(K;\nu)$, by \cite[Definition 1.3.8, Remark 1.3.9, Definition 1.3.10, Remark 1.3.11]{CF12}, we have $(\calF,\calE_1)$ is a separable Hilbert space. Let $\myset{u_i}_{i\ge1}$ be a dense subset of $(\calF,\calE_1)$. For all $i\ge1$, $\myset{\tilde{\calE}^{(n)}(u_i,u_i)}_{n\ge1}$ is a bounded sequence. By diagonal argument, there exists a subsequence $\myset{n_k}_{k\ge1}$ such that $\myset{\tilde{\calE}^{(n_k)}(u_i,u_i)}_{k\ge1}$ converges for all $i\ge1$. Since
$$\tilde{\calE}^{(n)}(u,u)\asymp\sup_{k\ge1}a_k(u)\asymp\calE(u,u)\text{ for all }u\in\calF,n\ge1,$$
we have $\myset{\tilde{\calE}^{(n_k)}(u,u)}_{k\ge1}$ converges for all $u\in\calF$. Let
$$\calE_{\loc}(u,u)=\lim_{k\to+\infty}\tilde{\calE}^{(n_k)}(u,u)\text{ for all }u\in\calF_{\loc}:=\calF.$$
Then
$$\calE_\loc(u,u)\asymp\sup_{k\ge1}a_k(u)\asymp\calE(u,u)\text{ for all }u\in\calF_\loc=\calF.$$
Hence $(\calE_\loc,\calF_\loc)$ is a regular closed form on $L^2(K;\nu)$. Since $1\in\calF_\loc$ and $\calE_\loc(1,1)=0$, by \cite[Lemma 1.6.5, Theorem 1.6.3]{FOT11}, we have $(\calE_\loc,\calF_\loc)$ on $L^2(K;\nu)$ is conservative.

For all $u\in\calF_\loc=\calF$, we have $u\circ f_i\in\calF=\calF_\loc$ for all $i=0,1,2$ and
\begin{align*}
&\frac{5}{3}\sum_{i=0}^2\calE_\loc(u\circ f_i,u\circ f_i)=\frac{5}{3}\sum_{i=0}^2\lim_{k\to+\infty}\tilde{\calE}^{(n_k)}(u\circ f_i,u\circ f_i)\\
&=\lim_{k\to+\infty}\frac{1}{n_k}\sum_{l=1}^{n_k}\left[\frac{5}{3}\sum_{i=0}^2\mybar{\calE}^{(l)}(u\circ f_i,u\circ f_i)\right]=\lim_{k\to+\infty}\frac{1}{n_k}\sum_{l=1}^{n_k}\mybar{\calE}^{(l+1)}(u,u)\\
&=\lim_{k\to+\infty}\left[\frac{1}{n_k}\sum_{l=1}^{n_k}\mybar{\calE}^{(l)}(u,u)+\frac{1}{n_k}\mybar{\calE}^{(n_k+1)}(u,u)-\frac{1}{n_k}\mybar{\calE}^{(1)}(u,u)\right]\\
&=\lim_{k\to+\infty}\tilde{\calE}^{(n_k)}(u,u)=\calE_\loc(u,u).
\end{align*}
Hence $(\calE_\loc,\calF_\loc)$ on $L^2(K;\nu)$ is self-similar.

For all $u,v\in\calF_\loc$ satisfying $\mathrm{supp}(u),\mathrm{supp}(v)$ are compact and $v$ is constant in an open neighborhood $U$ of $\mathrm{supp}(u)$, we have $K\backslash U$ is compact and $\mathrm{supp}(u)\cap(K\backslash U)=\emptyset$, hence $\delta=\mathrm{dist}(\mathrm{supp}(u),K\backslash U)>0$. Taking sufficiently large $n\ge1$ such that $2^{1-n}<\delta$, by self-similarity, we have
$$\calE_\loc(u,v)=\left(\frac{5}{3}\right)^n\sum_{w\in W_n}\calE_\loc(u\circ f_w,v\circ f_w).$$
For all $w\in W_n$, we have $u\circ f_w=0$ or $v\circ f_w$ is constant, hence $\calE_\loc(u\circ f_w,v\circ f_w)=0$, hence $\calE_\loc(u,v)=0$, that is, $(\calE_\loc,\calF_\loc)$ on $L^2(K;\nu)$ is strongly local.

For all $u\in\calF_\loc$, it is obvious that $u^+,u^-,1-u,\mybar{u}=(0\vee u)\wedge 1\in\calF_\loc$ and
$$\calE_\loc(u,u)=\calE_\loc(1-u,1-u).$$
Since $u^+u^-=0$ and $(\calE_\loc,\calF_\loc)$ on $L^2(K;\nu)$ is strongly local, we have $\calE_\loc(u^+,u^-)=0$. Hence
\begin{align*}
\calE_\loc(u,u)&=\calE_\loc(u^+-u^-,u^+-u^-)=\calE_\loc(u^+,u^+)+\calE_\loc(u^-,u^-)-2\calE_\loc(u^+,u^-)\\
&=\calE_\loc(u^+,u^+)+\calE_\loc(u^-,u^-)\ge\calE_\loc(u^+,u^+)=\calE_\loc(1-u^+,1-u^+)\\
&\ge\calE_\loc((1-u^+)^+,(1-u^+)^+)=\calE_\loc(1-(1-u^+)^+,1-(1-u^+)^+)=\calE_\loc(\mybar{u},\mybar{u}),
\end{align*}
that is, $(\calE_\loc,\calF_\loc)$ on $L^2(K;\nu)$ is Markovian. Hence $(\calE_\loc,\calF_\loc)$ is a self-similar strongly local regular Dirichlet form on $L^2(K;\nu)$.
\end{proof}

\begin{myrmk}
The idea of the construction of $\mybar{\calE}^{(n)},\tilde{\calE}^{(n)}$ is from \cite[Section 6]{KZ92}. The proof of Markovian property is from the proof of \cite[Theorem 2.1]{BBKT10}.
\end{myrmk}

\bibliographystyle{plain}

\def\cprime{$'$}

\end{document}